\newtheorem{thm}{Theorem}
\newtheorem{lem}{Lemma}
\theoremstyle{definition}
\theoremstyle{remark}
\begin{document}

\title[New Congruences of Partitions With Odd Parts Distinct]
 {New Congruences of Partitions \\ With Odd Parts Distinct}

\author{ Liuquan Wang }

\address{Department of Mathematics, National University of Singapore, Singapore, 119076}

\email{wangliuquan@u.nus.edu; mathlqwang@163.com}

\date{May 7, 2014}
\subjclass[2010]{Primary 05A17; Secondary 11P83}

\keywords{Congruences; partitions; distinct odd parts; theta functions}

\dedicatory{}


\begin{abstract}
 Let $\mathrm{pod}(n)$ denote  the number of partitions of $n$ with odd parts distinct, and  ${{r}_{k}}(n)$ be the number of representations of $n$ as sum of $k$ squares. We find the following two arithmetic relations: for any integer $n\ge 0$,
\[\mathrm{pod}(3n+2)\equiv 2{{(-1)}^{n+1}}{{r}_{5}}(8n+5) \pmod{9}, \]
and
\[\mathrm{pod}(5n+2)\equiv 2{{(-1)}^{n}}{{r}_{3}}(8n+3) \pmod{5}.\]
From which we deduce many interesting congruences including the following two infinite families of Ramanujan-type congruences: for $a \in \{11, 19\}$ and any integers $\alpha \ge 1$ and $n \ge 0$, we have
\[\mathrm{pod}\Big({{5}^{2\alpha +2}}n+\frac{a \cdot {{5}^{2\alpha +1}}+1}{8}\Big)\equiv 0  \pmod{5}.\]
\end{abstract}

\maketitle

\section{{Introduction}}
Let $\psi(q)$ be  one of Ramanujan's theta functions, namely
\[\psi (q)=\sum\limits_{n=0}^{\infty }{{{q}^{n(n+1)/2}}}=\frac{({{q}^{2}};{{q}^{2}})_{\infty }^{2}}{{{(q;q)}_{\infty }}}.\]
We denote by $\mathrm{pod}(n)$  the number of partitions of $n$  with odd parts  distinct. The generating function of $\mathrm{pod}(n)$ is
\begin{equation}\label{gen}
\sum\limits_{n=0}^{\infty }{\mathrm{pod}(n){{q}^{n}}}=\frac{(-q;{{q}^{2}})_{\infty }}{({{q}^{2}};{{q}^{2}})_{\infty }}=\frac{1}{\psi {{(-q)}}}.
\end{equation}

The arithmetic properties of $\mathrm{pod}(n)$ were first studied by  Hirschhorn and Sellers \cite{hisc} in 2010. They obtained some interesting congruences involving the following infinite family of Ramanujan-type congruences: for any integers $\alpha \ge 0$ and $n \ge 0$,
\[\mathrm{pod}\Big({{3}^{2\alpha +3}}n+\frac{23\times {{3}^{2\alpha +2}}+1}{8}\Big)\equiv 0 \pmod{3}.\]

Later on Radu and Sellers \cite{radu} obtained other deep congruences for $\mathrm{pod}(n)$ modulo $5$ and $7$, such as
\[\mathrm{pod}(135n+8)\equiv \mathrm{pod}(135n+107)\equiv \mathrm{pod}(135n+116)\equiv 0 \pmod{5}, \quad \text{\rm{and}}\]
\[\mathrm{pod}(567n+260)\equiv \mathrm{pod}(567n+449)\equiv 0 \pmod{7}.\]

For nonnegative integers $n$ and $k$,  let ${{r}_{k}}(n)$ (resp. ${{t}_{k}}(n)$) denote the number of representations of $n$ as sum of $k$ squares (resp. triangular numbers). In 2011, based on the generating function of $\mathrm{pod}(3n+2)$ found in \cite{hisc}, Lovejoy and Osburn discovered the following arithmetic relation:
\begin{equation}\label{osburn}
\mathrm{pod}(3n+2)\equiv {{(-1)}^{n}}{{r}_{5}}(8n+5) \pmod{3}.
\end{equation}

Following their steps, we will present some new congruences modulo 5 and 9 for $\mathrm{pod}(n)$.

Firstly, we find that  (\ref{osburn}) can be improved to a congruence modulo  9.
\begin{thm}\label{mod9}
For any integer $n\ge 0$, we have
\[\mathrm{pod}(3n+2)\equiv 2{{(-1)}^{n+1}}{{r}_{5}}(8n+5) \pmod{9}.\]
\end{thm}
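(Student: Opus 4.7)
The plan is to reduce the statement to an identity of eta-quotients modulo $9$. The starting point on the left is the explicit formula for $\sum_{n\ge 0}\mathrm{pod}(3n+2)q^n$ derived by Hirschhorn and Sellers in \cite{hisc}; this is the same input used by Lovejoy and Osburn for the mod-$3$ congruence (\ref{osburn}), and the task here is to refine their computation by keeping track of one further power of $3$.

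For the right-hand side, one needs a companion eta-quotient for $\sum_{n\ge 0}(-1)^{n+1}r_{5}(8n+5)q^n$. Since $\varphi(q)^5=\sum_{n\ge 0}r_{5}(n)q^n$ with $\varphi(q)=\sum_{n\in\mathbb{Z}}q^{n^2}$, the subseries at residue $5$ mod $8$ can be isolated by iterated $2$-dissection of $\varphi(q)$, beginning with $\varphi(q)=\varphi(q^4)+2q\psi(q^8)$, raising to the fifth power, and collecting the surviving terms. Substituting $q\to -q$ installs the alternating sign, and standard Ramanujan theta-function identities should then simplify the output to an eta-quotient of the same general shape as the Hirschhorn--Sellers formula.

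The final comparison modulo $9$ rests on the elementary congruences
\[(q;q)_\infty^3 \equiv (q^3;q^3)_\infty \pmod 3,\qquad (q;q)_\infty^9 \equiv (q^3;q^3)_\infty^3 \pmod 9,\]
together with their analogues for $(q^d;q^d)_\infty$, which suffice to collapse both sides into a common normal form. The main obstacle is precisely this last step: the mod-$3$ result of Lovejoy and Osburn already shows that the difference of the two sides is $3$ times an integer power series, and the heart of the refinement is to verify that this remaining quotient is itself divisible by $3$. This requires working with the full eta-quotient identities rather than their mere reductions modulo $3$, and amounts to checking a single explicit identity between two infinite products, a task most naturally carried out by direct algebraic manipulation of $q$-series.
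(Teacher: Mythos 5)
Your proposal is a plan rather than a proof: everything hinges on the final step, which you describe as ``checking a single explicit identity between two infinite products,'' but you neither exhibit that identity nor verify it, and that verification is the entire content of the theorem. There is also a conceptual problem in how you frame that step: the two series $\sum \mathrm{pod}(3n+2)(-q)^n$ and $-2\sum r_5(8n+5)q^n$ are not equal, only congruent modulo $9$, so there is no product identity to check --- what you must produce is a common normal form of both sides modulo $9$. The congruences $(q;q)_\infty^3\equiv(q^3;q^3)_\infty \pmod 3$ and $(q;q)_\infty^9\equiv(q^3;q^3)_\infty^3\pmod 9$ by themselves will not collapse two arbitrary eta-quotients to such a form; you give no indication of which exponents survive the Hirschhorn--Sellers dissection and the $2$-dissection of $\varphi(q)^5$, nor of where the constant $2(-1)^{n+1}$ comes from. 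And knowing from Lovejoy--Osburn that the difference of the two sides is $3$ times an integral series does not help you show that this quotient is again divisible by $3$ unless you control both sides exactly --- which is precisely the part you have omitted.

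The paper's argument is genuinely different and sidesteps all of this. From $\sum_{n\ge 0}\mathrm{pod}(n)(-q)^n=1/\psi(q)$ one writes $\psi(q)^9\sum_{n\ge 0}\mathrm{pod}(n)(-q)^n=\psi(q)^8=\sum_{n\ge 0} t_8(n)q^n$; Lemma \ref{palpha} gives $\psi(q)^9\equiv\psi(q^3)^3\pmod 9$, so extracting the terms $q^{3n+2}$, rescaling, and cancelling the unit $\psi(q)^3$ yields $\sum_{n\ge 0}\mathrm{pod}(3n+2)(-q)^n\equiv\psi(q)^5\pmod 9$, the arithmetic input being $t_8(3n+2)\equiv t_8(n)\pmod{27}$, which follows from the divisor-sum formula $t_8(n)=\sum_{d\mid n+1,\ d\ \mathrm{odd}}\bigl((n+1)/d\bigr)^3$ (Lemma \ref{t4t8}). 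Finally $t_5(n)=r_5(8n+5)/112$ (Lemma \ref{rsrelate}) together with $112\equiv 4\pmod 9$ and $4^{-1}\equiv -2\pmod 9$ produces the factor $2(-1)^{n+1}$. If you wish to salvage your route, you would have to carry out both dissections explicitly and exhibit the mod-$9$ reduction in full; as it stands, the decisive computation is missing.
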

The following result will be a consequence of Theorem \ref{mod9} upon invoking some properties of $r_{5}(n)$.
\begin{thm}\label{modcor}\label{mod9cor}
Let $p\ge 3$ be a prime, and $N$ be a positive integer such that $pN \equiv 5$ \text{\rm{(mod 8)}}. Let $\alpha$ be any nonnegative integer.\\
(1)  If $p\equiv 1$ \text{\rm{(mod 3)}}, then
\[\mathrm{pod}\Big(\frac{3{{p}^{6\alpha +5}}N+1}{8}\Big)\equiv 0 \pmod{3},\]
and
\[\mathrm{pod}\Big(\frac{3{{p}^{18\alpha +17}}N+1}{8}\Big)\equiv 0  \pmod{9}.\]
(2) If $p\equiv 2$ \text{\rm{(mod 3)}}, then
\[\mathrm{pod}\Big(\frac{3{{p}^{4\alpha +3}}N+1}{8}\Big)\equiv 0 \pmod{9}.\]
\end{thm}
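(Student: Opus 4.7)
The plan is to reduce the corollary via Theorem~\ref{mod9} to a divisibility statement for $r_5(p^k N)$, then exploit the Hecke-multiplicative structure of $\theta(q)^5 = \sum r_5(n)q^n$. Since $p$ is odd, $p^2 \equiv 1 \pmod 8$, so $p^k N \equiv pN \equiv 5 \pmod 8$ whenever $k$ is odd, and each exponent $6\alpha+5$, $18\alpha+17$, $4\alpha+3$ in the statement is odd. Setting $8n+5 = p^k N$ and applying Theorem~\ref{mod9} gives
\[
\mathrm{pod}\Big(\frac{3p^k N+1}{8}\Big) \equiv 2(-1)^{n+1}\, r_5(p^k N) \pmod 9,
\]
so the corollary reduces to showing $r_5(p^{6\alpha+5}N) \equiv 0 \pmod 3$ and $r_5(p^{18\alpha+17}N) \equiv 0 \pmod 9$ when $p \equiv 1 \pmod 3$, and $r_5(p^{4\alpha+3}N) \equiv 0 \pmod 9$ when $p \equiv 2 \pmod 3$.

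Next I would invoke a Hecke recursion for $r_5$. The generating series $\theta(q)^5$ is a modular form of weight $5/2$ on $\Gamma_0(4)$, and its Shimura lift lies in the one-dimensional space $M_4(\mathrm{SL}_2(\mathbb{Z}))$, spanned by the Eisenstein series $E_4$ whose $p$-th Hecke eigenvalue is $1+p^3$. Translating this eigenvalue back through the operator $T_{p^2}$ yields, for every odd prime $p$ coprime to $N$, the closed form
\[
r_5(p^{2k+1}N) \;=\; \sigma_3(p^k)\, r_5(pN) \;=\; (1+p^3+p^6+\cdots+p^{3k})\, r_5(pN).
\]
(The case $p \mid N$ is absorbed by writing $N = p^j M$ with $p \nmid M$ and shifting $k$.) I would then evaluate $\sigma_3(p^k)$ modulo $3$ and $9$. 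When $p \equiv 1 \pmod 3$, $p^3 \equiv 1 \pmod 9$, so $\sigma_3(p^k) \equiv k+1 \pmod 9$; the choices $k = 3\alpha+2$ and $k = 9\alpha+8$ give exponents $2k+1 = 6\alpha+5$ and $18\alpha+17$, killing $\sigma_3(p^k)$ modulo $3$ and $9$ respectively. When $p \equiv 2 \pmod 3$, $p^3 \equiv -1 \pmod 9$, so $\sigma_3(p^k)$ is an alternating sum vanishing mod $9$ exactly for $k$ odd, and $k = 2\alpha+1$ yields exponent $4\alpha+3$.

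The main obstacle is justifying the Hecke recursion in a form suitable for an elementary self-contained proof, since a direct appeal to the Shimura correspondence on half-integral weight forms carries technical baggage (nebentypus twists, Kohnen plus-space), and one must verify that the eigenvalue relation transports cleanly to Fourier coefficients indexed by $pN$, where the Jacobi-symbol correction term fortuitously vanishes. A cleaner alternative, more in keeping with the combinatorial spirit of the paper, is to invoke a classical explicit formula of Eisenstein--Hurwitz type expressing $r_5(n)$ as a twisted divisor sum, whose multiplicativity yields the same recursion $r_5(p^{2k+1}N) = \sigma_3(p^k) r_5(pN)$ directly, from which the corollary follows via the modular arithmetic above.
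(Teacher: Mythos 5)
Your proof is correct and follows essentially the same route as the paper: the paper likewise reduces, via Theorem~\ref{mod9}, to showing $3\mid r_5(p^{6\alpha+5}N)$, $9\mid r_5(p^{18\alpha+17}N)$ and $9\mid r_5(p^{4\alpha+3}N)$, and obtains exactly your recursion $r_5(p^{2k+1}N)=(1+p^3+\cdots+p^{3k})\,r_5(pN)$ by specializing Cooper's explicit formula (Lemma~\ref{r5relation}) to $n=pN$, where the Legendre-symbol term vanishes. The Hecke/Shimura detour is unnecessary baggage here --- Cooper's formula is precisely the ``classical explicit formula'' your last paragraph reaches for --- and your subsequent mod $3$ and mod $9$ evaluations of the geometric sum, with $k=3\alpha+2$, $9\alpha+8$, $2\alpha+1$, coincide with the paper's.
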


Secondly, with the same method used in proving Theorem \ref{mod9}, we can establish a similar congruence for $\mathrm{pod}(n)$ modulo 5.
\begin{thm}\label{mod5}
For any integer $n\ge 0$, we have
\[\mathrm{pod}(5n+2)\equiv 2{{(-1)}^{n}}{{r}_{3}}(8n+3) \pmod{5}.\]
\end{thm}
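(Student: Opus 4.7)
The plan is to mirror the strategy suggested for Theorem \ref{mod9}, with the prime $3$ replaced by $5$. Starting from $\sum_{n \geq 0} \mathrm{pod}(n) q^n = 1/\psi(-q)$, I would first apply the elementary congruence $f(q)^5 \equiv f(q^5) \pmod 5$ (valid for any $f \in \mathbb{Z}[[q]]$) to obtain the reduction
\[\frac{1}{\psi(-q)} \;=\; \frac{\psi(-q)^4}{\psi(-q)^5} \;\equiv\; \frac{\psi(-q)^4}{\psi(-q^5)} \pmod{5}.\]

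Next I would dissect $\psi(-q)^4$ along the residue class $2 \pmod 5$. This is the step where Jacobi's classical identity $\psi(q)^4 = \sum_{n \geq 0} \sigma(2n+1)\, q^n$ enters decisively: substituting $q \mapsto -q$ gives $\psi(-q)^4 = \sum_{n \geq 0}(-1)^n \sigma(2n+1) q^n$, and isolating the exponents $\equiv 2 \pmod 5$ (using $(-1)^{5n+2} = (-1)^n$) produces $\sum_{n \geq 0}(-1)^n \sigma(10n+5) q^{5n+2}$. Because $\sigma$ is multiplicative and $\sigma(5^k) = 1+5+\cdots+5^k \equiv 1 \pmod 5$ for every $k \geq 0$, one has $\sigma(5m) \equiv \sigma(m) \pmod 5$ for all $m \geq 1$, so this dissected piece is $\equiv q^2 \psi(-q^5)^4 \pmod 5$.

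Combining the two steps, the contribution of $1/\psi(-q)$ in the residue class $2 \pmod 5$ satisfies
\[\sum_{n \geq 0} \mathrm{pod}(5n+2)\, q^{5n+2} \;\equiv\; \frac{q^2 \psi(-q^5)^4}{\psi(-q^5)} \;=\; q^2 \psi(-q^5)^3 \pmod{5},\]
and replacing $q^5$ by $q$ gives $\sum_{n \geq 0} \mathrm{pod}(5n+2) q^n \equiv \psi(-q)^3 \pmod 5$. To finish, I would appeal to the classical fact that any integer $\equiv 3 \pmod 8$ is a sum of three squares only when all three summands are odd; combined with $\sum_{k \in \mathbb{Z}} q^{(2k+1)^2} = 2q\psi(q^8)$ this yields $\sum_{n \geq 0} r_3(8n+3) q^n = 8\psi(q)^3$. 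Sending $q \mapsto -q$ and multiplying by $2$ produces $\sum_{n \geq 0} 2(-1)^n r_3(8n+3) q^n = 16\psi(-q)^3 \equiv \psi(-q)^3 \pmod 5$, which matches the expression just computed for $\mathrm{pod}(5n+2)$.

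The hard part will be the $5$-dissection of $\psi(-q)^4$ modulo $5$: a priori there is no reason the residue class $2 \pmod 5$ inside $\psi(-q)^4$ should boil down to a recognisable theta quotient, and a direct attack via explicit $5$-dissections of eta products would be laborious. The crucial insight is that Jacobi's divisor-sum identity recasts this dissection as the elementary congruence $\sigma(5m) \equiv \sigma(m) \pmod 5$, keeping the whole argument within elementary number theory and paralleling in spirit the treatment of the $3$-dissection behind Theorem \ref{mod9}.
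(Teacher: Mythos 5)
Your proposal is correct and follows essentially the same route as the paper: both reduce $1/\psi(-q)$ modulo $5$ via $\psi(q)^5\equiv\psi(q^5)$, use Jacobi's identity $\psi(q)^4=\sum\sigma(2n+1)q^n$ together with $\sigma(5m)\equiv\sigma(m)\pmod 5$ to show the $5$-dissection of $\psi^4$ in the class $2\pmod 5$ reproduces $\psi^4$, conclude $\sum\mathrm{pod}(5n+2)(-q)^n\equiv\psi(q)^3\pmod 5$, and finish with $r_3(8n+3)=8t_3(n)$. The only cosmetic differences are that you phrase the extraction as a quotient by $\psi(-q^5)$ rather than multiplying through by $\psi(q)^5$, and you rederive the relation $r_3(8n+3)=8t_3(n)$ (via three odd squares) where the paper cites it as a lemma.
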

Some miscellaneous congruences can be deduced from this theorem.

\begin{thm}\label{mod5cor}
For any integers $n \ge 0$ and $\alpha \ge 1$, we have
\[\mathrm{pod}\Big({{5}^{2\alpha +2}}n+\frac{11\cdot {{5}^{2\alpha +1}}+1}{8}\Big)\equiv 0 \pmod{5},\]
and
\[\mathrm{pod}\Big({{5}^{2\alpha +2}}n+\frac{19\cdot {{5}^{2\alpha +1}}+1}{8}\Big)\equiv 0 \pmod{5}.\]
\end{thm}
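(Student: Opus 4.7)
The plan is to derive Theorem \ref{mod5cor} from Theorem \ref{mod5} by reformulating the target congruence as a divisibility statement for $r_{3}$. The preliminary arithmetic is routine: since $5^{2}\equiv 1\pmod{8}$ we have $5^{2\alpha+1}\equiv 5\pmod{8}$, so for $a\in\{11,19\}$ one has $5a+1\equiv 0\pmod{8}$ and hence $(a\cdot 5^{2\alpha+1}+1)/8\in\mathbb{Z}$. Moreover $(a\cdot 5^{2\alpha+1}+1)/8\equiv 8^{-1}\equiv 2\pmod{5}$, so
\[M_{\alpha}:=5^{2\alpha+2}n+\frac{a\cdot 5^{2\alpha+1}+1}{8}=5m+2\]
for a nonnegative integer $m$, and a short algebraic manipulation yields
\[8m+3=\frac{8M_{\alpha}-1}{5}=5^{2\alpha}(40n+a).\]
By Theorem \ref{mod5}, proving Theorem \ref{mod5cor} therefore reduces to establishing
\[r_{3}\!\left(5^{2\alpha}(40n+a)\right)\equiv 0\pmod{5}\quad(\alpha\ge 1,\ a\in\{11,19\}).\]

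The key ingredient is that the theta series $\theta(q)^{3}=\sum_{n\ge 0}r_{3}(n)q^{n}$ is a Hecke eigenform of weight $3/2$ on $\Gamma_{0}(4)$, with eigenvalue $6$ for the operator $T_{25}$; the eigenvalue is forced by the two coefficients $r_{3}(1)=6$ and $r_{3}(25)=30$. Unwinding the eigenvalue equation gives the classical Shimura-type recursion: for every positive integer $N$,
\[r_{3}(25N)+\left(\frac{-N}{5}\right)r_{3}(N)+5\,r_{3}\!\left(\frac{N}{25}\right)=6\,r_{3}(N),\]
with the usual conventions that $r_{3}(N/25)=0$ when $25\nmid N$ and $\left(\frac{-N}{5}\right)=0$ when $5\mid N$.

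Now I would specialise to $N=40n+a$ with $a\in\{11,19\}$. Then $\gcd(N,5)=1$, and $-N\pmod{5}$ is $4$ or $1$, both of which are quadratic residues, so $\left(\frac{-N}{5}\right)=1$. The recursion at $N$ then gives $r_{3}(25N)=5\,r_{3}(N)\equiv 0\pmod{5}$, handling the case $\alpha=1$. For $\alpha\ge 2$, apply the recursion at $N':=5^{2\alpha-2}N$: since $5\mid N'$, the middle term vanishes and one obtains
\[r_{3}(5^{2\alpha}N)=6\,r_{3}(5^{2\alpha-2}N)-5\,r_{3}(5^{2\alpha-4}N)\equiv r_{3}(5^{2\alpha-2}N)\pmod{5}.\]
A simple induction on $\alpha$ propagates the divisibility, and the theorem follows.

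The only real obstacle is pinning down the Hecke-eigenform identity for $\theta(q)^{3}$ at $p=5$ cleanly within the framework of the paper (or, equivalently, re-deriving the specific identity $r_{3}(25N)=(6-\left(\frac{-N}{5}\right))r_{3}(N)$ from a theta-function manipulation); once this recursion is in place, everything else reduces to straightforward arithmetic modulo $5$, $8$, and $40$.
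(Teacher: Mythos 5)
Your proposal is correct and follows essentially the same route as the paper: reduce via Theorem \ref{mod5} to showing $r_{3}\left(5^{2\alpha}(40n+a)\right)\equiv 0\pmod 5$, then kill that $r_{3}$ value using the multiplicative recursion for sums of three squares. The "obstacle" you flag at the end is not one: the Hecke-type identity you need is exactly Lemma \ref{r3relation} of the paper (your recursion is its $\alpha=1$ case with $p=5$), and the paper simply applies that lemma in closed form for general $\alpha$ rather than inducting from the $\alpha=1$ relation.
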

\begin{thm}\label{cubemod5}
Let $p\equiv 4$ \text{\rm{(mod 5)}} be a prime, and $N$ be a positive integer which is coprime to $p$  such that $pN \equiv 3$ \text{\rm{(mod 8)}}. We have
\[\mathrm{pod}\Big(\frac{5{{p}^{3}}N+1}{8}\Big)\equiv 0  \pmod{5}.\]
\end{thm}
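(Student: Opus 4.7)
The approach is to reduce via Theorem~\ref{mod5} to a divisibility statement about $r_3$, and then to exploit a Hecke-type recurrence for the three-squares function.

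First I will set up the indexing. Since $p$ is odd, $p^2\equiv 1\pmod 8$, so the hypothesis $pN\equiv 3\pmod 8$ forces $p^3N\equiv 3\pmod 8$. Hence $n:=(p^3N-3)/8$ is a nonnegative integer and a direct calculation gives
\[\frac{5p^3N+1}{8}=5n+2.\]
Theorem~\ref{mod5} therefore yields
\[\mathrm{pod}\Big(\frac{5p^3N+1}{8}\Big)\equiv 2(-1)^n r_3(8n+3)=2(-1)^n r_3(p^3N)\pmod 5,\]
so it suffices to prove $r_3(p^3N)\equiv 0\pmod 5$.

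Next I will invoke the Hecke-type recurrence for the three-squares function: for every odd prime $p$ and every positive integer $m$,
\[r_3(p^2m)=\bigg(p+1-\Big(\frac{-m}{p}\Big)\bigg)r_3(m)-p\,r_3(m/p^2),\]
with the convention that $r_3(x)=0$ unless $x$ is a nonnegative integer. This identity reflects the fact that $\theta(q)^3$ is a Hecke eigenform of weight $3/2$ on $\Gamma_0(4)$ with eigenvalue $p+1$ for the operator $T_{p^2}$; it can equivalently be read off from the classical formula $r_3(n)=12H(4n)$ expressing $r_3$ in terms of the Hurwitz class number. Applying it with $m=pN$: the hypothesis $p\nmid N$ gives $p^2\nmid pN$, so the last term vanishes, and $p\mid pN$ forces the Legendre symbol to vanish, leaving
\[r_3(p^3N)=(p+1)\,r_3(pN).\]

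The conclusion is then immediate: since $p\equiv 4\equiv -1\pmod 5$, we have $p+1\equiv 0\pmod 5$, and hence $r_3(p^3N)\equiv 0\pmod 5$ as required. I expect the most delicate step to be justifying the recurrence for $r_3(p^2 m)$; if a citation to the theory of half-integral weight Hecke operators (or to a standard class-number formulation) is to be avoided, the same identity can be obtained directly by manipulating the generating function $8q^3\psi(q^8)^3=\sum_{n\geq 0}r_3(8n+3)q^{8n+3}$ together with a $p$-dissection of the underlying theta series.
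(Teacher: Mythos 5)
Your proof is correct and follows essentially the same route as the paper: reduce via Theorem~\ref{mod5} to showing $r_3(p^3N)\equiv 0\pmod 5$, then use the recurrence $r_3(p^3N)=(p+1)r_3(pN)$ and $p+1\equiv 0\pmod 5$. The ``Hecke-type recurrence'' you worry about justifying is exactly Lemma~\ref{r3relation} of the paper with $\alpha=1$, so no appeal to half-integral weight Hecke theory is needed.
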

For example, let $p=19$ and $N=8n+1$ where $n\ge 0$ and $n  \not\equiv 7$ (mod 19), we have
\[\mathrm{pod}(34295n+4287)\equiv 0 \pmod{5}.\]

\begin{thm}\label{2case}
Let $p\ge 3$ be a prime, and $N$ be a positive integer which is not divisible by $p$ such that $pN \equiv 3$ \text{\rm{(mod 8)}}. Let $\alpha $ be any nonnegative integer. \\
(1) If $p\equiv 1$ \text{\rm{(mod 5)}}, we have
\[\mathrm{pod}\Big(\frac{5 {{p}^{10\alpha +9}}N+1}{8}\Big)\equiv 0 \pmod{5}.\]
(2) If $p\equiv 2,3,4$ \text{\rm{(mod 5)}}, we have
\[\mathrm{pod}\Big(\frac{5 {{p}^{8\alpha +7}}N+1}{8}\Big)\equiv 0 \pmod{5}.\]
\end{thm}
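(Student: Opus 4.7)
The plan is to apply Theorem \ref{mod5} to reduce the claim to a divisibility statement for $r_3$ at odd prime powers, and then to evaluate $r_3(p^{2j+1}N) \pmod 5$ through a closed-form identity. Since $p$ is odd we have $p^{2i} \equiv 1 \pmod 8$, so the hypothesis $pN \equiv 3 \pmod 8$ gives $p^k N \equiv 3 \pmod 8$ for every odd $k$. Setting $n = (p^k N - 3)/8$ makes $8n+3 = p^k N$ and $5n+2 = (5 p^k N + 1)/8$, so Theorem \ref{mod5} yields
\[\mathrm{pod}\Big(\frac{5 p^k N + 1}{8}\Big) \equiv 2(-1)^n r_3(p^k N) \pmod 5,\]
and the task becomes showing $r_3(p^k N) \equiv 0 \pmod 5$ with $k = 10\alpha+9$ in case (1) and $k = 8\alpha+7$ in case (2).

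The central technical input is the identity
\[r_3(p^{2j+1} N) = r_3(pN) \cdot (1 + p + p^2 + \cdots + p^j) \qquad (j \geq 0),\]
valid for any odd prime $p$ with $p \nmid N$ and $pN \equiv 3 \pmod 8$. I would derive this from the classical Gauss--Hurwitz formula $r_3(m) = 24 H(m)$ (for $m \equiv 3 \pmod 8$, with $H$ the Hurwitz class number) combined with the recursion
\[H(p^2 m) = \Big(p + 1 - \Big(\tfrac{-m}{p}\Big)\Big) H(m) - p\, H(m/p^2),\]
where $H(m/p^2) := 0$ when $p^2 \nmid m$. Writing $v_j := H(p^{2j+1}N)$, for $j \geq 1$ the Legendre symbol vanishes (since $p \mid p^{2j-1} N$), and $H(p^{-1}N) = 0$ at the $j = 0$ boundary; the recursion collapses to $v_{j+1} = (p+1) v_j - p v_{j-1}$ ($j \geq 1$) with $v_0 = H(pN)$ and $v_1 = (p+1) H(pN)$. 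The characteristic equation $x^2 - (p+1)x + p = 0$ has roots $1$ and $p$, and matching initial data yields $v_j = H(pN)(p^{j+1}-1)/(p-1)$, i.e., the claimed geometric sum.

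With this identity in hand, each case becomes an elementary congruence. In case (1), $j = 5\alpha+4$ and the sum has $5(\alpha+1)$ terms each $\equiv 1 \pmod 5$ (since $p \equiv 1 \pmod 5$), so the sum is $\equiv 0 \pmod 5$. In case (2), $j = 4\alpha+3$ and the sum has $4(\alpha+1)$ terms; for $p \equiv 2, 3, 4 \pmod 5$ the multiplicative order of $p$ modulo $5$ divides $4$, so every consecutive block of four terms satisfies $p^i(1+p+p^2+p^3) \equiv 0 \pmod 5$, and the full sum is divisible by $5$ as a sum of $\alpha+1$ such blocks. The main obstacle is assembling the closed-form evaluation of $r_3(p^{2j+1}N)$; once that identity is in hand the divisibility follows immediately, and the particular exponents $10\alpha+9$ and $8\alpha+7$ are forced exactly by the order of $p$ in $(\mathbb{Z}/5\mathbb{Z})^\times$ being $1$ (case 1) or a divisor of $4$ (case 2).
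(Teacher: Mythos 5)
Your proposal is correct and its overall architecture is the same as the paper's: reduce via Theorem \ref{mod5} to showing $r_{3}(p^{k}N)\equiv 0\pmod{5}$ for $k=10\alpha+9$ or $8\alpha+7$, express $r_{3}(p^{2j+1}N)$ as $(1+p+\cdots+p^{j})\,r_{3}(pN)$, and finish with the elementary observation that this geometric sum vanishes mod $5$ when $j=5\alpha+4$ and $p\equiv 1$, or when $j=4\alpha+3$ and $p\not\equiv 0,1\pmod 5$. The one point of divergence is how the geometric-sum identity is obtained: the paper simply substitutes $n=pN$ into the cited Lemma \ref{r3relation} (the Legendre symbol $\big(\tfrac{-pN}{p}\big)$ and the term $r_{3}(pN/p^{2})$ both vanish because $p\,\|\,pN$, leaving exactly $\tfrac{p^{\alpha+1}-1}{p-1}r_{3}(pN)$), whereas you re-derive that specialization from the Gauss formula $r_{3}(m)=24H(m)$ for $m\equiv 3\pmod 8$ together with the Hurwitz class-number recursion and a linear-recurrence computation. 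Your route is more self-contained (it proves the needed case of the lemma rather than citing it) at the cost of importing the class-number machinery; the paper's route is shorter because the required multiplicative relation for $r_{3}$ is taken off the shelf. You also make explicit the small but necessary check that $p^{k}N\equiv 3\pmod 8$ for odd $k$, which the paper leaves implicit; both arguments are complete and correct.
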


\section{Preliminaries }
\begin{lem}\label{palpha}
(Cf. \cite[Lemma 1.2]{radu}.)
Let $p$ be a prime and $\alpha $ be a positive integer. Then
\[(q;q)_{\infty }^{{{p}^{\alpha }}}\equiv ({{q}^{p}};{{q}^{p}})_{\infty }^{{{p}^{\alpha -1}}} \pmod{{{p}^{\alpha }}}.\]
\end{lem}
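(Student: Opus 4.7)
The plan is to induct on $\alpha \ge 1$, using only the Frobenius identity $f(q)^p \equiv f(q^p) \pmod{p}$ for $f \in \mathbb{Z}[[q]]$, which follows coefficient-wise from Fermat's little theorem (equivalently, from the fact that the $p$-th power map is a ring endomorphism of $\mathbb{F}_p[[q]]$).

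For the base case $\alpha = 1$, I apply this identity with $f(q) = (q;q)_\infty \in \mathbb{Z}[[q]]$ to obtain immediately $(q;q)_\infty^{p} \equiv (q^p;q^p)_\infty \pmod{p}$, which is the lemma at $\alpha = 1$.

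For the inductive step, assume $\alpha \ge 2$ and the claim holds for $\alpha - 1$. Then there exists $g \in \mathbb{Z}[[q]]$ with
\[(q;q)_\infty^{p^{\alpha-1}} = (q^p;q^p)_\infty^{p^{\alpha-2}} + p^{\alpha-1} g(q).\]
Raising to the $p$-th power and applying the binomial theorem yields
\[(q;q)_\infty^{p^{\alpha}} = (q^p;q^p)_\infty^{p^{\alpha-1}} + \sum_{k=1}^{p} \binom{p}{k}\, (q^p;q^p)_\infty^{p^{\alpha-2}(p-k)} \bigl(p^{\alpha-1} g(q)\bigr)^{k}.\]
It then suffices to verify that every term in the sum lies in $p^{\alpha}\, \mathbb{Z}[[q]]$, which will give the desired congruence modulo $p^{\alpha}$.

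The only real work lies in this $p$-adic bookkeeping, and each case is elementary. For $1 \le k \le p-1$, we have $p \mid \binom{p}{k}$ and $(p^{\alpha-1})^{k}$ contributes $k(\alpha-1)$ further powers of $p$, so the $p$-adic valuation of the $k$-th term is at least $1 + k(\alpha-1) \ge \alpha$ (using $\alpha \ge 2$ and $k \ge 1$). For $k = p$, the binomial coefficient contributes nothing but $(p^{\alpha-1})^{p} = p^{p(\alpha-1)}$, and $p(\alpha-1) \ge \alpha$ holds whenever $(p-1)(\alpha-1) \ge 1$, i.e.\ for all $p \ge 2$ and $\alpha \ge 2$. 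Thus no term conceptually resists the estimate; the substance of the argument is simply arranging the correct binomial expansion, and I anticipate no genuine obstacle beyond keeping track of the exponents.
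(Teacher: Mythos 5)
Your proof is correct: the base case is the Frobenius congruence, and in the inductive step the valuation estimates $1+k(\alpha-1)\ge\alpha$ for $1\le k\le p-1$ and $p(\alpha-1)\ge\alpha$ for $k=p$ both check out, so every correction term is divisible by $p^{\alpha}$. The paper itself gives no proof of this lemma (it is quoted from Radu and Sellers), and your induction is the standard argument for it, so there is nothing to compare against and no gap to report.
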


\begin{lem}\label{t4t8}
For any prime $p \ge 3$,  we have
\[{{t}_{4}}\Big(pn+\frac{p-1}{2}\Big)\equiv {{t}_{4}}(n) \pmod{p},  \quad {{t}_{8}}(pn+p-1)\equiv {{t}_{8}}(n) \pmod{p^3}.\]
\end{lem}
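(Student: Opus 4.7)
The plan is to reduce both congruences to elementary multiplicative properties of the classical divisor functions $\sigma$ and $\sigma_3$, after first rewriting $t_4(n)$ and $t_8(n)$ as divisor sums via well-known identities for $\psi(q)^4$ and $\psi(q)^8$.

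For the first congruence, I would invoke the classical identity
\[\psi(q)^4=\sum_{n\ge 0}\sigma(2n+1)\,q^n,\]
i.e.\ $t_4(n)=\sigma(2n+1)$. Since $2\bigl(pn+(p-1)/2\bigr)+1=p(2n+1)$, this gives
\[t_4\Big(pn+\frac{p-1}{2}\Big)=\sigma\bigl(p(2n+1)\bigr),\]
so the assertion modulo $p$ reduces to the congruence $\sigma(pm)\equiv\sigma(m)\pmod p$ for every positive integer $m$. This is routine: if $\gcd(p,m)=1$ then $\sigma(pm)=(p+1)\sigma(m)$, while if $m=p^a m'$ with $a\ge 1$ and $\gcd(p,m')=1$, the identity $\sigma(p^k)=(p^{k+1}-1)/(p-1)$ yields $\sigma(pm)-\sigma(m)=p^{a+1}\sigma(m')$, which is divisible by $p$.

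For the second congruence the parallel expansion is
\[\psi(q)^8=\sum_{n\ge 0}\Bigg(\sum_{\substack{d\,\mid\,n+1 \\ (n+1)/d\ \text{odd}}}d^3\Bigg)q^n,\]
a classical consequence of Jacobi's eight-square theorem together with the dissection $\phi(q)=\phi(q^4)+2q\psi(q^8)$. Writing $n+1=2^a m$ with $m$ odd, the inner divisor sum equals $2^{3a}\sigma_3(m)$, so $t_8(n)=2^{3a}\sigma_3(m)$. Because $p$ is odd, $p(n+1)=2^a(pm)$ has the same $2$-adic valuation, hence
\[t_8(pn+p-1)=t_8\bigl(p(n+1)-1\bigr)=2^{3a}\sigma_3(pm).\]
The lemma is therefore reduced to $\sigma_3(pm)\equiv\sigma_3(m)\pmod{p^3}$. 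For $\gcd(p,m)=1$ this is immediate from $\sigma_3(pm)=(1+p^3)\sigma_3(m)$; for $m=p^a m'$ with $a\ge 1$, the same geometric-series computation yields $\sigma_3(pm)-\sigma_3(m)=p^{3(a+1)}\sigma_3(m')$, which is in fact divisible by $p^6$.

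The only non-routine ingredient is the closed-form expansion of $\psi(q)^8$; this is the main obstacle if it cannot be quoted directly. In that case I would derive it by combining Jacobi's formula $r_8(N)=16\sum_{d\,\mid\,N}(-1)^{N-d}d^3$ with the identity $(\phi(q)-\phi(q^4))^8=256\,q^8\psi(q^8)^8$ to read off the coefficients of $\psi(q^8)^8$ and then recover those of $\psi(q)^8$ after replacing $q$ by $q^{1/8}$. Once both divisor identities are in hand, the congruences are transparent.
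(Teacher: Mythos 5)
Your proposal is correct and follows essentially the same route as the paper: both reduce the lemma to the divisor-sum identities $t_4(n)=\sigma(2n+1)$ and $t_8(n)=\sum_{d\mid n+1,\ d\ \mathrm{odd}}\bigl((n+1)/d\bigr)^3$ and then verify the elementary congruences $\sigma(pm)\equiv\sigma(m)\pmod p$ and $\sigma_3(pm)\equiv\sigma_3(m)\pmod{p^3}$, the only cosmetic difference being that you argue via multiplicativity of $\sigma$ and $\sigma_3$ while the paper discards the divisors divisible by $p$ directly.
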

\begin{proof}
By \cite[Theorem  3.6.3]{Bruce}, we know ${{t}_{4}}(n)=\sigma (2n+1).$
For any positive integer $N$, we have
\[\sigma (N)=\sum\limits_{d|N, \, p|d}{d}+\sum\limits_{d|N, \,p \nmid d}{d}\equiv \sum\limits_{d|N, \, p \nmid d}{d} \pmod{p}.\]
Let $N=2n+1$ and $N=p(2n+1)$ respectively.  It is easy to deduce that $\sigma (p(2n+1))\equiv \sigma (2n+1)$ (mod $p$). This clearly implies the first congruence.

From \cite[equation (3.8.3) in page 81]{Bruce}, we know
 \[{{t}_{8}}(n)=\sum\limits_{\begin{smallmatrix}
 d|(n+1) \\
 d \, \mathrm{odd}
\end{smallmatrix}}{{{\Big(\frac{n+1}{d}\Big)}^{3}}}.  \]
By a similar argument we can prove the second congruence.
\end{proof}
\begin{lem}\label{rsrelate}
(Cf. \cite{relation}.) For $1\le k\le 7$, we have
\[{{r}_{k}}(8n+k)={{2}^{k}}\Big(1+\frac{1}{2}\binom{k}{4}\Big){{t}_{k}}(n).\]
\end{lem}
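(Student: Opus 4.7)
My plan is to extract the coefficient $[q^{8n+k}]$ of $\varphi(q)^{k}$, where $\varphi(q)=\sum_{n\in\mathbb{Z}}q^{n^{2}}$, by means of the $2$-dissection
\[\varphi(q)=\varphi(q^{4})+2q\,\psi(q^{8}),\]
which follows from splitting the sum over $n$ by parity, using $(2m+1)^{2}=8\cdot m(m+1)/2+1$. Binomial expansion then gives
\[\varphi(q)^{k}=\sum_{j=0}^{k}\binom{k}{j}\varphi(q^{4})^{k-j}(2q)^{j}\psi(q^{8})^{j},\]
and every exponent appearing in $\varphi(q^{4})^{k-j}\psi(q^{8})^{j}$ is divisible by $4$. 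So $[q^{8n+k}]$ picks up contributions only from indices $j$ with $j\equiv k\pmod{4}$, which for $0\le j\le k\le 7$ forces $j=k$ and, when $k\ge 4$, additionally $j=k-4$.

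The $j=k$ term immediately contributes $2^{k}t_{k}(n)$, and when $k\le 3$ this already matches the claim because $\binom{k}{4}=0$. For $4\le k\le 7$, I would further compute
\[[q^{8n+4}]\,\varphi(q^{4})^{4}\psi(q^{8})^{k-4}=\sum_{T\ge 0}t_{k-4}(T)\,r_{4}(2n+1-2T),\]
obtained by expanding $\varphi(q^{4})^{4}=\sum_{m}r_{4}(m)q^{4m}$ and $\psi(q^{8})^{k-4}=\sum_{T}t_{k-4}(T)q^{8T}$ and matching $4m+8T=8n+4$.

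The decisive step is Jacobi's four-squares theorem: since $2n+1-2T$ is odd, $r_{4}(2n+1-2T)=8\sigma(2n+1-2T)=8\,t_{4}(n-T)$, using the identity $t_{4}(m)=\sigma(2m+1)$ recalled in the proof of Lemma \ref{t4t8}. Combining this with the Cauchy product $\psi(q)^{k-4}\psi(q)^{4}=\psi(q)^{k}$ collapses the inner sum to $8\,t_{k}(n)$. The $j=k-4$ term therefore contributes $\binom{k}{k-4}\cdot 2^{k-4}\cdot 8\,t_{k}(n)=2^{k}\cdot\tfrac{1}{2}\binom{k}{4}\,t_{k}(n)$, and adding the $j=k$ part yields
\[r_{k}(8n+k)=2^{k}\Big(1+\tfrac{1}{2}\binom{k}{4}\Big)t_{k}(n),\]
as desired. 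The main obstacle is recognizing that the apparently awkward sum over $T$ reduces cleanly; that the argument of $r_{4}$ is forced to be odd is exactly what allows Jacobi's theorem to convert the sum into a triangular-number convolution, after which the rest is routine. The hypothesis $k\le 7$ is what guarantees that at most two values of $j$ contribute; for $k\ge 8$ a third index $j=k-8$ intervenes and the clean binomial pattern breaks.
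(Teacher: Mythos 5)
The paper offers no proof of this lemma; it is quoted from Barrucand, Cooper, and Hirschhorn \cite{relation}, so there is nothing internal to compare against. Your argument is correct and complete, and it is essentially the argument of that reference: the $2$-dissection $\varphi(q)=\varphi(q^{4})+2q\psi(q^{8})$ is exactly the right tool, the congruence condition $j\equiv k\pmod 4$ correctly isolates the indices $j=k$ and (for $k\ge 4$) $j=k-4$, and the reduction of the cross term via Jacobi's $r_{4}(N)=8\sigma(N)$ for odd $N$ together with Legendre's $t_{4}(m)=\sigma(2m+1)$ (the same identity invoked in the proof of Lemma \ref{t4t8}) turns $\sum_{T}t_{k-4}(T)\,r_{4}(2n+1-2T)$ into $8\sum_{T}t_{k-4}(T)\,t_{4}(n-T)=8\,t_{k}(n)$ by the Cauchy product. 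The bookkeeping $\binom{k}{k-4}2^{k-4}\cdot 8=2^{k}\cdot\frac{1}{2}\binom{k}{4}$ checks out, as does the sanity check against the constants $112$ and $8$ used for $k=5$ and $k=3$ in the proofs of Theorems \ref{mod9} and \ref{mod5}. The only points worth stating explicitly are the conventions $r_{4}(m)=0$ and $t_{4}(m)=0$ for $m<0$, so that the sum over $T$ may be truncated at $T=n$ on both sides of the substitution; and your closing observation that a third index $j=k-8$ enters for $k\ge 8$ correctly identifies why the hypothesis $k\le 7$ is needed.
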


\begin{lem}\label{r5relation}
(Cf. \cite{Cooper}.)
Let $p\ge 3$ be a prime and $n$ be a positive integer such that ${{p}^{2}} \nmid n$. For any integer $\alpha \ge 0$, we have
\[{{r}_{5}}({{p}^{2\alpha }}n)=\Bigg(\frac{{{p}^{3\alpha +3}}-1}{{{p}^{3}}-1}-p\Big(\frac{n}{p}\Big)\frac{{{p}^{3\alpha }}-1}{{{p}^{3}}-1}\Bigg){{r}_{5}}(n),\]
where $(\frac{\cdot }{p})$ denotes the Legendre symbol.
\end{lem}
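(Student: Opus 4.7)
The plan is to derive the identity from the theory of modular forms of half-integral weight. Since
\[
\theta(q)^5 = \sum_{n\ge 0} r_5(n)\, q^n, \quad\text{where}\quad \theta(q) = \sum_{m\in\mathbb{Z}} q^{m^2},
\]
lies in the space $M_{5/2}(\Gamma_0(4))$ of modular forms of weight $5/2$, Shimura's Hecke operators $T(p^2)$ act on it for every odd prime $p$. The first step is to pin down the Hecke eigenvalue $\lambda_p$ governing $\theta^5$; decomposing $\theta^5$ into an Eisenstein series plus a cusp form and reading off the Eisenstein coefficients (or equivalently, invoking the classical Hardy-type closed form expressing $r_5(n)$ as a Hurwitz class-number sum) gives $\lambda_p = p^3 + 1$.

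Next, I would write out Shimura's formula for the $n$th Fourier coefficient of $T(p^2)f$ when $f$ has weight $5/2$ and trivial character, namely
\[
(p^3+1)\, r_5(n) = r_5(p^2 n) + p\left(\frac{n}{p}\right) r_5(n) + p^3\, r_5(n/p^2).
\]
Substituting $n\mapsto p^{2\alpha}n$ with $p^2\nmid n$ kills the Legendre-symbol contribution (because $p\mid p^{2\alpha}n$ as soon as $\alpha\ge 1$) and produces the clean two-term recurrence
\[
r_5(p^{2\alpha+2}n) = (p^3+1)\, r_5(p^{2\alpha}n) - p^3\, r_5(p^{2\alpha-2}n), \qquad \alpha \ge 1,
\]
with initial data $r_5(n)$ and $r_5(p^2 n) = \bigl(p^3+1 - p\bigl(\tfrac{n}{p}\bigr)\bigr)\, r_5(n)$ from the $\alpha=0$ case.

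Solving this linear recurrence is routine: its characteristic polynomial factors as $x^2 - (p^3+1)x + p^3 = (x-p^3)(x-1)$, so $r_5(p^{2\alpha}n)/r_5(n) = A\, p^{3\alpha} + B$, and the two initial conditions force $A = (p^3 - p(\tfrac{n}{p}))/(p^3-1)$ and $B = (p(\tfrac{n}{p}) - 1)/(p^3-1)$. Combining these over the common denominator $p^3-1$ rearranges to $\frac{p^{3\alpha+3}-1}{p^3-1} - p\bigl(\tfrac{n}{p}\bigr)\,\frac{p^{3\alpha}-1}{p^3-1}$, which is the claimed formula. The main obstacle I anticipate is the first step: the space $M_{5/2}(\Gamma_0(4))$ is not one-dimensional, so $\theta^5$ is not \emph{a priori} a Hecke eigenform, and one must control the cusp-form component before committing to $\lambda_p = p^3+1$. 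A convenient workaround is to import the classical Hardy-Stieltjes explicit formula for $r_5(n)$, in which the cusp-form contribution has already been absorbed; the two-term recurrence can then be verified directly at the level of divisor/class-number sums, bypassing the half-integral Hecke-eigenvalue machinery altogether.
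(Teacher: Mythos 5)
The paper does not prove this lemma at all --- it is quoted as a known result with a pointer to Cooper's paper \emph{Sums of five, seven and nine squares}, so there is no in-paper argument to compare against. Your proposal is a genuine proof sketch and its skeleton is sound: the Shimura--Hecke relation you quote for weight $5/2$ and trivial character is the correct one, the Legendre symbol does vanish at $p^{2\alpha}n$ for $\alpha\ge 1$, the two-term recurrence with characteristic roots $p^3$ and $1$ is right, and your solution of it does reassemble into the stated closed form (note $r_5(n)>0$ always, so dividing by $r_5(n)$ is harmless). The one gap you flag --- that $\theta^5$ is not \emph{a priori} a $T(p^2)$-eigenform --- is real but closes more easily than you suggest: $S_{5/2}(\Gamma_0(4))=0$, so $M_{5/2}(\Gamma_0(4))$ is the two-dimensional Eisenstein space, and under the Shimura correspondence it maps to the weight-$4$ Eisenstein space on $\Gamma_0(2)$, spanned by $E_4(\tau)$ and $E_4(2\tau)$, on which every $T_p$ with $p$ odd acts by the single scalar $1+p^3$; hence the \emph{entire} space $M_{5/2}(\Gamma_0(4))$ is a $T(p^2)$-eigenspace with eigenvalue $p^3+1$ and no control of a cusp component is needed. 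Your fallback via the Hardy-type explicit formula for $r_5(n)$ is essentially what Cooper does --- his derivation is elementary, through explicit Eisenstein/Lambert-series identities rather than Hecke operators --- so either route is legitimate; yours is less self-contained but explains conceptually where the polynomial $\frac{p^{3\alpha+3}-1}{p^3-1}-p\left(\frac{n}{p}\right)\frac{p^{3\alpha}-1}{p^3-1}$ comes from.
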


\begin{lem}\label{r3relation}
(Cf. \cite{3square}.)
Let $p \ge 3$ be a prime. For any  integers $n \ge 1$  and $\alpha \ge 0$,  we have
\[{{r}_{3}}({{p}^{2 \alpha }}n)=\Bigg(\frac{{{p}^{\alpha +1}}-1}{p-1}-\Big(\frac{-n}{p}\Big)\frac{{{p}^{\alpha }}-1}{p-1}\Bigg){{r}_{3}}(n)-p\frac{{{p}^{\alpha }}-1}{p-1}{{r}_{3}}(n/{{p}^{2}}).\]
Here we take ${{r}_{3}}(n/{{p}^{2}})=0$ unless  ${{p}^{2}} | n$ .
\end{lem}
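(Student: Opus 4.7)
My plan is to reduce Lemma \ref{r3relation} to the $\alpha = 1$ case via a simple linear recursion in $\alpha$, and to establish that base case via the Hecke action on a half-integer weight modular form. Let $\theta(q) = \sum_{n \in \mathbb{Z}} q^{n^2}$, so that $\theta^3(q) = \sum_{n \ge 0} r_3(n) q^n$ is a weight-$3/2$ modular form on $\Gamma_0(4)$.

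For the base case, I invoke the action of $T_{p^2}$ on weight-$3/2$ $q$-expansions (Shimura 1973): for $f = \sum a(n) q^n$ and an odd prime $p$,
\[(T_{p^2} f)(n) = a(p^2 n) + \Big(\frac{-n}{p}\Big) a(n) + p\, a(n/p^2).\]
Under the Shimura correspondence, $\theta^3$ lifts to an Eisenstein series of weight $2$ whose $T_p$-eigenvalue is $\sigma_1(p) = p+1$; consequently $\theta^3$ is itself a $T_{p^2}$-eigenform with eigenvalue $p+1$, and reading off the coefficient of $q^n$ gives
\[r_3(p^2 n) = \Big(p+1 - \Big(\frac{-n}{p}\Big)\Big) r_3(n) - p\, r_3(n/p^2),\]
which is exactly the $\alpha = 1$ case of the lemma. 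A small sanity check ($p=3$, $n=1$: $r_3(9)=30$, $\frac{-1}{3}=-1$, $r_3(1)=6$, giving $30 = 4\cdot 6$) confirms the eigenvalue.

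To conclude, I apply the base case with $n$ replaced by $p^{2\beta} n$ for $\beta \ge 1$. Because $p \mid p^{2\beta} n$, the Legendre symbol $\Big(\frac{-p^{2\beta}n}{p}\Big)$ vanishes and the identity collapses to the clean three-term recursion
\[r_3(p^{2(\beta+1)} n) = (p+1)\, r_3(p^{2\beta} n) - p\, r_3(p^{2(\beta-1)} n).\]
The characteristic polynomial $x^2 - (p+1)x + p = (x-1)(x-p)$ has roots $1$ and $p$, so $r_3(p^{2\alpha} n)$ takes the shape $A + B p^\alpha$ in $\alpha$; matching the values at $\alpha = 0, 1$ (or, equivalently, a direct induction on $\alpha$) produces the geometric-series coefficients $\frac{p^{\alpha+1}-1}{p-1}$ and $\frac{p^\alpha - 1}{p-1}$ appearing in the statement. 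The main obstacle is the base case, which rests on the nontrivial theory of half-integer weight modular forms; a more classical alternative uses Gauss's formula $r_3(n) = 12\, H(4n)$ (in suitable residue classes) together with the known recursion of the Hurwitz--Kronecker class number $H$ at primes, but this merely substitutes one deep arithmetic input for another. Everything beyond the base case is routine linear-recursion algebra.
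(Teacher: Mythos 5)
Your argument is essentially correct, but note that the paper itself offers no proof of this lemma: it is quoted from Hirschhorn and Sellers \cite{3square}, so there is no in-paper argument to compare against, and your proposal should be judged on its own. Its two-step structure is sound: once the $\alpha=1$ identity $r_3(p^2n)=\big(p+1-\big(\tfrac{-n}{p}\big)\big)r_3(n)-p\,r_3(n/p^2)$ is known for \emph{all} $n\ge 1$, replacing $n$ by $p^{2\beta}n$ with $\beta\ge 1$ kills the Legendre symbol and gives $r_3(p^{2\beta+2}n)=(p+1)r_3(p^{2\beta}n)-p\,r_3(p^{2\beta-2}n)$, whose characteristic roots are $1$ and $p$; the stated closed form is of the shape $A+Bp^{\alpha}$, satisfies the recursion, and agrees with the data at $\alpha=0,1$, so the induction closes. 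The real content is the base case, and your route through the action of $T_{p^2}$ on the weight-$3/2$ form $\theta^3$ is legitimate --- in fact $M_{3/2}(\Gamma_0(4))$ is one-dimensional, so $\theta^3$ is automatically a $T_{p^2}$-eigenform and the eigenvalue $p+1$ can be read off from a single coefficient, which is cleaner than invoking the Shimura lift. The source \cite{3square} instead derives the same identity elementarily from Gauss's expression of $r_3$ in terms of class numbers of binary quadratic forms and the class-number recursion at $p$; this is the more self-contained but longer path, and you correctly identify it as the alternative. One cosmetic slip: your sanity check ``$30=4\cdot 6$'' is false as written; what you are verifying is the eigenvalue relation $r_3(9)+\big(\tfrac{-1}{3}\big)r_3(1)=30-6=24=4\cdot 6$, equivalently $30=5\cdot 6$ in the rearranged form. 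This does not affect the validity of the argument.
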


\section{Proofs of the Theorems}
\begin{proof}[Proof of Theorem \ref{mod9}]
Let $p=3$ in Lemma \ref{t4t8}. We deduce that ${{t}_{8}}(3n+2)\equiv {{t}_{8}}(n)$ (mod 9).
By (\ref{gen}) we have
\[\psi {{(q)}^{9}}\sum\limits_{n=0}^{\infty }{\mathrm{pod}(n){{(-q)}^{n}}}=\psi {{(q)}^{8}}=\sum\limits_{n=0}^{\infty }{{{t}_{8}}(n){{q}^{n}}}.\]
By Lemma \ref{palpha} we obtain $\psi {{(q)}^{9}}\equiv  {\psi ({{q}^{3}})}^3$ (mod 9). Hence
\[{\psi (q^3)}^3\sum\limits_{n=0}^{\infty }{\mathrm{pod}(n){{(-q)}^{n}}} \equiv \sum\limits_{n=0}^{\infty }{{{t}_{8}}(n){{q}^{n}}} \pmod{9}.\]
If we extract those terms of the form ${{q}^{3n+2}}$ on both sides, we obtain
\[{\psi (q^3)}^3 \sum\limits_{n=0}^{\infty }{\mathrm{pod}(3n+2){{(-q)}^{3n+2}}}\equiv \sum\limits_{n=0}^{\infty }{{{t}_{8}}(3n+2){{q}^{3n+2}}} \pmod{9}.\]
Dividing both sides by ${{q}^{2}}$, then replacing ${{q}^{3}}$ by $q$, we get
\[\psi {{(q)}^{3}}\sum\limits_{n=0}^{\infty }{\mathrm{pod}(3n+2){{(-q)}^{n}}}\equiv \sum\limits_{n=0}^{\infty }{{{t}_{8}}(3n+2){{q}^{n}}}\equiv \sum\limits_{n=0}^{\infty }{{{t}_{8}}(n){{q}^{n}}}=\psi {{(q)}^{8}} \pmod{9}.\]
Hence
\[\sum\limits_{n=0}^{\infty }{\mathrm{pod}(3n+2){{(-q)}^{n}}}\equiv \psi {{(q)}^{5}}\equiv \sum\limits_{n=0}^{\infty }{{{t}_{5}}(n){{q}^{n}}} \pmod{9}.\]
Comparing the coefficients of $q^{n}$ on both sides, we deduce that $\mathrm{pod}(3n+2)\equiv {{(-1)}^{n}}{{t}_{5}}(n)$ (mod 9).

Let $k=5$ in Lemma \ref{rsrelate}. We obtain ${{t}_{5}}(n)={{r}_{5}}(8n+5)/112$,  and from this the theorem follows.
\end{proof}

\begin{proof}[Proof of Theorem \ref{mod9cor}]
(1)  Let $n=pN$ in Lemma \ref{r5relation}, and then  we replace $\alpha $ by $3\alpha +2$. Since
\[\frac{{{p}^{9\alpha +9}}-1}{{{p}^{3}}-1}=1+{{p}^{3}}+\cdots +{{p}^{3(3\alpha +2)}}\equiv 0 \pmod{3},\]
we deduce that ${{r}_{5}}({{p}^{6\alpha +5}}N)\equiv 0$ (mod 3).

Let $n=\frac{{{p}^{6\alpha +5}}N-5}{8}$ in  Theorem \ref{mod9}. We deduce that $\mathrm{pod}(\frac{3{{p}^{6\alpha +5}}N+1}{8})\equiv 0$ (mod 3).

Similarly, let $n=pN$ in Lemma \ref{r5relation} and we replace $\alpha $ by $9\alpha +8$.  Since $p\equiv 1$ (mod 3) implies
${{p}^{3}}\equiv 1$ (mod 9), we have
\[\frac{{{p}^{27\alpha +27}}-1}{{{p}^{3}}-1}=1+{{p}^{3}}+\cdots +{{p}}^{3(9\alpha +8)}\equiv 0 \pmod{9}.\]
Hence ${{r}_{5}}({{p}^{18\alpha +17}}N)\equiv 0$ (mod 9).

Let $n=\frac{{{p}^{18\alpha +17}}N-5}{8}$ in  Theorem \ref{mod9}. We deduce that $\mathrm{pod}(\frac{3{{p}^{18\alpha +17}}N+1}{8})\equiv 0$ (mod 9).

(2) Let $n=pN$ in Lemma \ref{r5relation}, and then we replace $\alpha $ by $2\alpha +1$. Note that $p\equiv 2$ (mod 3) implies ${{p}^{3}}\equiv -1$ (mod 9). Since ${{p}^{6\alpha +6}}\equiv 1$ (mod 9), we have ${{r}_{5}}({{p}^{4\alpha +3}}N)\equiv 0$ (mod 9).

Let $n=\frac{{{p}^{4\alpha +3}}N-5}{8}$ in Theorem \ref{mod9}. We complete our proof.
\end{proof}

\begin{proof}[Proof of Theorem \ref{mod5}]
Let $p=5$ in Lemma \ref{t4t8}. We deduce that ${{t}_{4}}(5n+2)\equiv {{t}_{4}}(n)$ (mod 5).

By (\ref{gen}) we have
\[\psi {{(q)}^{5}}\sum\limits_{n=0}^{\infty }{\mathrm{pod}(n){{(-q)}^{n}}}=\psi {{(q)}^{4}}=\sum\limits_{n=0}^{\infty }{{{t}_{4}}(n){{q}^{n}}}.\]
By Lemma \ref{palpha} we obtain $\psi {{(q)}^{5}}\equiv  \psi ({{q}^5})$ (mod 5). Hence
\[\psi {{(q^{5})}}\sum\limits_{n=0}^{\infty }{\mathrm{pod}(n){{(-q)}^{n}}} \equiv \sum\limits_{n=0}^{\infty }{{{t}_{4}}(n){{q}^{n}}} \pmod{5}.\]
If we extract those terms of the form ${{q}^{5n+2}}$ on both sides, we obtain
\[\psi (q^5) \sum\limits_{n=0}^{\infty }{\mathrm{pod}(5n+2){{(-q)}^{5n+2}}}\equiv \sum\limits_{n=0}^{\infty }{{{t}_{4}}(5n+2){{q}^{5n+2}}} \pmod{5}.\]
Dividing both sides by ${{q}^{2}}$, and then replacing ${{q}^{5}}$ by $q$,  we get
\[\psi (q)\sum\limits_{n=-\infty }^{\infty }{\mathrm{pod}(5n+2){{(-q)}^{n}}}\equiv \sum\limits_{n=0}^{\infty }{{{t}_{4}}(5n+2){{q}^{n}}}\equiv \sum\limits_{n=0}^{\infty }{{{t}_{4}}(n){{q}^{n}}}=\psi {{(q)}^{4}} \pmod{5}.\]
Hence we have
\[\sum\limits_{n=0}^{\infty }{\mathrm{pod}(5n+2){{(-q)}^{n}}
}\equiv \psi {{(q)}^{3}}= \sum\limits_{n=0}^{\infty }{{{t}_{3}}(n){{q}^{n}}} \pmod{5}.\]
Comparing the coefficients of $q^{n}$ on both sides, we deduce that $\mathrm{pod}(5n+2)\equiv {{(-1)}^{n}}{{t}_{3}}(n)$ (mod 5).

Let $k=3$ in Lemma \ref{rsrelate}. We obtain ${{t}_{3}}(n)={{r}_{3}}(8n+3)/8$,  from which the theorem follows.
\end{proof}

\begin{proof}[Proof of Theorem \ref{mod5cor}]
Let $p=5$ and $n=5m+r$ ($r\in \{1,4\}$) in Lemma \ref{r3relation}. Since $\big(\frac{-r}{5}\big)=1$, we deduce that ${{r}_{3}}({{5}^{2\alpha }}(5m+r))\equiv 0$ (mod 5) for any integer $\alpha \ge 1$.

Let $n=\frac{{{5}^{2\alpha }}(40m+a)-3}{8}$ ($a\in \{11,19\}$). By Theorem \ref{mod5}, we have
\[{{r}_{3}}(8n+3)={{r}_{3}}({{5}^{2\alpha }}(40m+a))\equiv 0 \pmod{5}.\]
Hence
\[\mathrm{pod}\Big({{5}^{2\alpha +2}}m+\frac{a\cdot {{5}^{2\alpha +1}}+1}{8}\Big)=\mathrm{pod}(5n+2)\equiv 2{{(-1)}^{n}}{{r}_{3}}(8n+3)\equiv 0 \pmod{5}.\]
\end{proof}

\begin{proof}[Proof of Theorem \ref{cubemod5}]
Let $\alpha =1$ and $n=pN$ in Lemma \ref{r3relation}. We have
\[{{r}_{3}}({{p}^{3}}N)=(1+p){{r}_{3}}(pN)\equiv 0 \pmod{5}.\]
Let $n=\frac{{{p}^{3}}N-3}{8}$ in Theorem \ref{mod5}, we have
\[\mathrm{pod}\Big(\frac{5{{p}^{3}}N+1}{8}\Big)=\mathrm{pod}(5n+2)\equiv 2{{(-1)}^{n}}{{r}_{3}}(8n+3)=2{{(-1)}^{n}}{{r}_{3}}({{p}^{3}}N)\equiv 0 \pmod{5}.\]
\end{proof}

\begin{proof}[Proof of Theorem \ref{2case}]
(1)  Let $n=pN$ in Lemma \ref{r3relation}, and then we replace $\alpha $ by $5\alpha +4$. We have
\[\frac{{{p}^{5\alpha +5}}-1}{p-1}=1+p+\cdots +{{p}^{5\alpha +4}}\equiv 0 \pmod{5}.\]
Hence ${{r}_{3}}({{p}^{10\alpha +9}}N)\equiv 0$ (mod 5). Let $n=\frac{{{p}^{10\alpha +9}}N-3}{8}$ in Theorem \ref{mod5}. We have
\[\mathrm{pod}\Big(\frac{5{{p}^{10\alpha +9}}N+1}{8}\Big)=\mathrm{pod}(5n+2) \equiv 2{{(-1)}^{n}}{{r}_{3}}({{p}^{10\alpha +9}}N)\equiv 0  \pmod{5}.\]
(2)  Let $n=pN$ in Lemma \ref{r3relation}, and then we replace $\alpha $ by $4\alpha +3$. Since  ${{p}^{4\alpha +4}}\equiv 1$ (mod 5), we deduce that ${{r}_{3}}({{p}^{8\alpha +7}}N)\equiv 0$ (mod 5). Let $n=\frac{{{p}^{8\alpha +7}}N-3}{8}$ in Theorem \ref{mod5}, we have
\[\mathrm{pod}\Big(\frac{5{{p}^{8\alpha +7}}N+1}{8}\Big)=\mathrm{pod}(5n+2)\equiv 2{{(-1)}^{n}}{{r}_{3}}({{p}^{8\alpha +7}}N)\equiv 0 \pmod{5}.\]
\end{proof}

\bibliographystyle{amsplain}

\end{document}